\documentclass[12pt,letterpaper]{article}
\usepackage{mathptmx}       % selects Times Roman as basic font
\usepackage{helvet}         % selects Helvetica as sans-serif font
\usepackage{courier}        % selects Courier as typewriter font
\usepackage{type1cm}        % activate if the above 3 fonts are
                            % not available on your system
%
\usepackage{makeidx}         % allows index generation
\usepackage{graphicx}        % standard LaTeX graphics tool
                             % when including figure files
\usepackage{multicol}        % used for the two-column index
\usepackage{amsmath}
\usepackage{amssymb}

\usepackage{amsthm}
\usepackage{multirow}
\usepackage{array}
\usepackage{tabularx}
\usepackage{filecontents}
\usepackage[utf8x]{inputenc}
\usepackage{subcaption}
\captionsetup{compatibility=false}
\usepackage{graphicx}
\usepackage[numbers,square]{natbib}
\usepackage{xcolor}
\usepackage{dcolumn}
\usepackage{sectsty}
\usepackage{booktabs,  underscore, url, graphicx}
\usepackage[euler]{textgreek}
%\usepackage[pagewise,displaymath,mathlines]{lineno}
%\linenumbers
\newtheorem{thm}{Theorem}
\newtheorem{lem}[thm]{Lemma}

\newcommand{\llll}[1] {\left #1}
\newcommand{\rrrr}[1] {\right #1}

\newcommand{\dddd}[2]{\dfrac{#1}{#2}}

\newcommand{\aaaa}{\alpha}

\newcommand{\bbbb}{\beta}
\newcommand{\ssss}{\sigma}
\newcommand{\eeee}{\epsilon}
\newcommand{\GGGG}{\Gamma}
\newcommand{\gggg}{\gamma}
\newcommand{\zzzz}{\zeta}
\newcommand{\dddddd}{\delta}
\newcommand{\LLLL}{\lambda}
%0884665571 Ivo
%\captionsetup{belowskip=12pt,aboveskip=12pt}
%\hfuzz=20pt

\makeindex             % used for the subject index
                       % please use the style svind.ist with
                       % your makeindex program
\begin{document}
\title{Numerical solutions of  ordinary fractional differential equations with singularities}
%\titlerunning{Numerical solutions of ordinary FDEs with singularities}
%for an abbreviated version of
% your contribution title if the original one is too long
\author{Yuri Dimitrov, Ivan Dimov, Venelin Todorov}
% Use \authorrunning{Short Title} for an abbreviated version of
% your contribution title if the original one is too long
%\institute{Y. Dimitrov \at Department of Mathematics and Physics, University of Forestry, Sofia  1756, Bulgaria\\
              %\email{yuri.dimitrov@ltu.bg}
%\and I. Dimov \at   Institute of Information and Communication Technologies, Bulgarian Academy of Sciences,\\ Department of Parallel Algorithms, Acad. Georgi Bonchev Str., Block 25 A, 1113 Sofia, Bulgaria,\
%\email{ivdimov@bas.bg}
%\and V. Todorov \at
%Institute of Mathematics and Informatics, Bulgarian Academy of Sciences, Department of Information Modeling, Acad. Georgi Bonchev Str., Block 8, 1113 Sofia, Bulgaria,\\
 %\email{vtodorov@math.bas.bg}\\
 %Institute of Information and Communication Technologies, Bulgarian Academy of Sciences,\\ Department of Parallel Algorithms,\ % Acad. Georgi Bonchev Str., Block 25 A, 1113 Sofia, Bulgaria\\
%\email{venelin@parallel.bas.bg}\\
%%\\
%$*$  Corresponding author
%}
% Use the package "url.sty" to avoid
% problems with special characters
% used in your e-mail or web address
%
\date{}
\maketitle
%%\linenumbers
%\abstract*{The solutions of fractional differential equations (FDEs) have a natural singularity at the initial point.  The accuracy of their numerical solutions is lower than the accuracy of the numerical solutions of FDEs whose solutions are differentiable functions. In the present paper we propose a method for improving the accuracy of the numerical solutions of  ordinary linear FDEs  with constant coefficients which uses the fractional Taylor polynomials of the solutions. The numerical solutions of the two-term and three-term  FDEs  are studied in the paper.}

\abstract{The solutions of fractional differential equations (FDEs) have a natural singularity at the initial point.  The accuracy of their numerical solutions is lower than the accuracy of the numerical solutions of FDEs whose solutions are differentiable functions. In the present paper we propose a method for improving the accuracy of the numerical solutions of  ordinary linear FDEs  with constant coefficients which uses the fractional Taylor polynomials of the solutions. The numerical solutions of the two-term and three-term  FDEs  are studied in the paper.}

\section{Introduction}
In recent years there is a growing interest in applying FDEs for modeling diffusion processes in biology, engineering and finance \cite{Cartea2007,Srivastava2014}. The two main approaches to fractional differentiation are  the Caputo and Riemann-Liouville fractional derivatives. The Caputo  derivative of order $\aaaa$, where $0<\alpha<1$ is defined as
\begin{equation*}\label{Cder}
D_C^\alpha y(t)=y^{(\alpha)}(t)=\dddd{d^\aaaa}{d t^\aaaa}y(t)=\dfrac{1}{\Gamma (1-\alpha)}\int_0^t \dfrac{y'(\xi)}{(t-\xi)^\alpha}d\xi.
\end{equation*}
The Caputo derivative is a standard choice for a fractional derivative in the models using FDEs. The finite difference schemes for numerical solution of FDEs involve approximations of the fractional derivative.
 Let $t_n=n h$, where $h$ is a small positive number and $y_n=y(t_n)=y(n h).$ The L1 approximation  is an important and  commonly used approximation of the  Caputo derivative.
 \begin{equation} \label{1a1}
y^{(\alpha)}_n = \dfrac{1}{h^\alpha \GGGG(2-\aaaa)}\sum_{k=0}^{n} \ssss_k^{(\alpha)} y_{n-k}+O\llll(h^{2-\aaaa}\rrrr),
\end{equation}
where $\ssss_0^{(\alpha)}=1,\; \ssss_n^{(\alpha)}=(n-1)^{1-\aaaa}-n^{1-\aaaa}$ and
$$\ssss_k^{(\alpha)}=(k+1)^{1-\alpha}-2k^{1-\alpha}+(k-1)^{1-\alpha},\quad (k=1,\cdots,n-1).$$
In \cite{Dimitrov2016} we obtain the second-order approximation the Caputo derivative
\begin{equation}\label{1a2}
y^{(\aaaa)}_n= \dddd{1}{\GGGG(2-\aaaa)h^\aaaa}\sum_{k=0}^n \delta_k^{(\aaaa)} y_{n-k}+O\llll(h^{2}\rrrr),
\end{equation}
where $\dddddd_k^{(\aaaa)}=\ssss_k^{(\aaaa)}$ for $2\leq k\leq n$ and
$$\dddddd_0^{(\aaaa)}=\ssss_0^{(\aaaa)}-\zzzz(\aaaa-1), \dddddd_1^{(\aaaa)}=\ssss_1^{(\aaaa)}+2\zzzz(\aaaa-1),\dddddd_2^{(\aaaa)}=\ssss_2^{(\aaaa)}-\zzzz(\aaaa-1).$$
 When $0<\aaaa<1$ and the function $y\in C^2[0,t_n]$, the L1 approximation has an accuracy $O\llll(h^{2-\aaaa} \rrrr)$ and approximation \eqref{1a2} has an accuracy  $O\llll(h^{2} \rrrr)$. The zeta function satisfies $\zzzz(0)=-1/2$. From \eqref{1a2} with $\aaaa=1$ we obtain the second-order approximation for the first derivative
\begin{align}\label{1der}
y'_n=\dddd{1}{h}\llll(\dddd{3}{2}y_n-2y_{n-1}+\dddd{1}{2}y_{n-2}  \rrrr)+O\llll(h^2\rrrr).
\end{align}
The   Caputo  derivative   of order $\aaaa$, where $1<\alpha<2$ is defined as
\begin{equation*}\label{Cder2}
D_C^\alpha y(t)=y^{(\alpha)}(t)=\dddd{d^\aaaa}{d t^\aaaa}y(t)=\dfrac{1}{\Gamma (2-\alpha)}\int_0^t \dfrac{y''(\xi)}{(t-\xi)^{\alpha-1}}d\xi.
\end{equation*}
 In \cite{Dimitrov2017} we obtain the expansion formula of order $4-\aaaa$ of the L1 approximation of the Caputo derivative. When $0<\aaaa<2$, approximation \eqref{1a2} has an asymptotic expansion
\begin{align*}
\dddd{1}{\GGGG(2-\aaaa)h^\aaaa}\sum_{k=0}^n \delta_k^{(\aaaa)} y_{n-k}=y^{(\aaaa)}_n&+\llll(D^2 y_n^{(\aaaa)}-\dddd{y'_0}{\GGGG(-\aaaa)t^{1+\aaaa}} \rrrr)\dddd{h^2}{12}-\\
&\dddd{\zzzz (\aaaa-1)+\zzzz (\aaaa-2)}{\GGGG(2-\aaaa)}y'''_n h^{3-\aaaa}+O\llll(h^{\min \{3,4-\aaaa\}}  \rrrr).
\end{align*}
When $1<\aaaa<2$ approximation \eqref{1a2} has an accuracy $O\llll(h^{3-\aaaa}  \rrrr)$. 
In \cite{Dimitrov2018} we obtain the asymptotic expansion formula
\begin{align*}
\dddd{1}{\GGGG(-\aaaa)h^\aaaa}\llll(\sum_{k=1}^{n-1}
\dddd{y_{n-k}}{k^{1+\aaaa}}-\zzzz(1+\aaaa)y_n\rrrr)=y_n^{(\aaaa)}-&\dddd{\zzzz(\aaaa)}{\GGGG(-\aaaa)}y'_n h^{1-\aaaa}\\
&+\dddd{\zzzz(\aaaa-1)}{2\GGGG(-\aaaa)}y''_n h^{2-\aaaa}+O\llll(h^{3-\aaaa}\rrrr),
\end{align*}
and an approximation of the Caputo derivative
\begin{equation}\label{4a1}
\dddd{1}{\GGGG(-\aaaa)h^\aaaa}\sum_{k=0}^n \gggg_k^{(\aaaa)}z_{n-k}=z_n^{(\aaaa)}+O\llll( h^{3-\aaaa}\rrrr),
\end{equation}
where $\gggg_k^{(\aaaa)}=1/k^{1+\aaaa},(k\geq 3)$ and $\gggg_0^{(\aaaa)}=-\zzzz(1+\aaaa)+\dddd{3}{2} \zzzz(\aaaa)-\dddd{1}{2}\zzzz(\aaaa-1),$
$$\gggg_1^{(\aaaa)}=1-2\zzzz(\aaaa)+ \zzzz(\aaaa-1),\; \gggg_2^{(\aaaa)}=\dddd{1}{2^{1+\aaaa}}+\dddd{1}{2} \zzzz(\aaaa)-\dddd{1}{2}\zzzz(\aaaa-1).$$
 Approximation \eqref{4a1} has an accuracy $O\llll( h^{3-\aaaa}\rrrr)$ when the function $z\in C^3[0,t_n]$ and satisfies $z(0)=z'(0)=z''(0)=0$.

The Miller-Ross sequential  derivative for the  Caputo derivative of order $n\aaaa$ is defined as
 $$y^{[n\aaaa]}(t)=D^{n\aaaa}y(t)=D_C^{\aaaa}D_C^{\aaaa}\cdots D_C^{\aaaa}y(t).$$
The Caputo and Miller-Ross derivatives of order $2\aaaa$ of the function $y(t)=1+t^\aaaa$ satisfy $y^{[2\alpha]}(t)=0$ and $y^{(2\alpha)}(t)=\GGGG(1+\aaaa)t^{-\aaaa}/\GGGG(1-\aaaa)$. The fractional Taylor polynomials of the function $y(t)$ are defined as
$$T_m^{(\aaaa)}(t)= \sum_{k=0}^m\dddd{y^{[k\aaaa]}(0) t^{\aaaa k}}{\GGGG(\aaaa k+1)}.$$
The fractional Taylor polynomials  $T_m^{(\aaaa)}(t)$ (polyfractonomials) are defined at the initial point of fractional differentiation $t=0$ and are polynomials with respect to $t^\aaaa$.
An important class of special functions in fractional calculus are the  one-parameter and two-parameter Mittag-Leffler functions
$$E_\alpha (t)=\sum_{n=0}^\infty \dfrac{t^n}{\Gamma(\alpha n+1)}, \quad
E_{\alpha,\beta} (t)=\sum_{n=0}^\infty \dfrac{t^n}{\Gamma(\alpha n+\beta)}.$$
The Mittag-Leffler functions generalize the exponential function
and appear in the analytical solutions of fractional and integer-order differential equations. The Miller-Ross derivatives of the function $E_\aaaa\llll(t^\aaaa\rrrr)$ satisfy
$$D^{n\aaaa}E_\aaaa\llll(t^\aaaa\rrrr)=E_\aaaa\llll(t^\aaaa\rrrr), \quad \llll. D^{n\aaaa}E_\aaaa\llll(t^\aaaa\rrrr)\rrrr|_{t=0}
=E_\aaaa\llll(0\rrrr)=1.$$
The two-term equation is called fractional relaxation equation, when $0<\aaaa<1$.
\begin{equation} \label{2e1}
y^{(\aaaa)}(t)+By(t)=F(t),\; y(0)=y_0.
\end{equation}
 The exact solution of equation \eqref{2e1} is expressed with the Mittag-Leffler functions as
\begin{equation} \label{21}
y(t)= y_0 \textbf{}E_{\aaaa}(-B t^\aaaa)+\int_{0}^{t}\xi^{\aaaa-1} E_{\aaaa,\aaaa}\llll(-B \xi^\aaaa\rrrr)F(t-\xi)d\xi.
\end{equation}
When the solution of the two-term equation $y\in C^3[0,T]$, the numerical solutions which use approximations \eqref{1a1}, \eqref{1a2} and (4) of the Caputo derivative have accuracy $O\llll(h^{2-\aaaa}\rrrr),\;O\llll(h^{2}\rrrr)$ and $O\llll(h^{3-\aaaa}\rrrr)$. The numerical solutions of FDEs which have smooth solutions have been studied extensively in the last three decades. The finite difference schemes involve  approximations of the Caputo derivative related to the constructions of  the L1 approximation and the Gr\"{u}nwald-Letnikov difference approximation \cite{Alikhanov2015,ChenDeng2014,DingLi2017,GaoSunZhang2014,RenWang2017}.   When $F(t)=0$  the two-term equation \eqref{2e1} has the solution $y(t)=E_{\aaaa}(-B t^\aaaa)$. When $0<\aaaa<1$, the function $E_{\aaaa}(-B t^\aaaa)$ has a singularity at the initial point, because its derivatives are unbounded at $t=0$. This property holds for most ordinary and partial FDEs. The singularity of the solutions of FDEs adds a significant difficulty to the construction of high-order numerical solutions  \cite{JinZhou2015,MurilloYuste2013, ZengZhangKarniadakis2017,ZhangSunLiao2014}. In the present paper we propose a method for transforming linear FDEs  with constant coeeficients into FDEs whose solutions are smooth functions.
 In section 2 and section 3 the method is applied for computing the numerical solutions of the two-term and the three-term FDEs.
\section{Two-term FDE}
 The numerical and analytical solutions of the two-term ordinary FDE  are studied in \cite{Diethelm2010,DiethelmSiegmundTuan2017,Dimitrov2014,Dimitrov2017,Lazarov20151,LiChenYe2011,Podlubny1999}. Let $N$ be a positive integer and $h=T/N$. Suppose that
 \begin{equation} \tag{*}
 \dfrac{1}{h^\alpha }\sum_{k=0}^{n} \LLLL_k^{(\alpha)} y_{n-k}=y^{(\alpha)}_n +O\llll(h^{\bbbb(\aaaa)} \rrrr)
\end{equation}
is an approximatiom of the Caputo derivative of  order $\bbbb (\aaaa)$. Now we drive the numerical solution of two-term equation \eqref{2e1}, which uses approximation (*) of the Caputo derivative. By approximating the Caputo derivative of equation \eqref{2e1} at the point $t_n=nh$ we obtain
 \begin{equation*} 
 \dfrac{1}{h^\alpha }\sum_{k=0}^{n-1} \LLLL_k^{(\alpha)} y_{n-k}+By_n=F_n +O\llll(h^{\bbbb(\aaaa)} \rrrr).
\end{equation*}
 The numerical solution  $\{u_n\}_{n=0}^{N}$ of equation
\eqref{2e1} is computed with $u_0=1$ and
 \begin{equation*} 
 \dfrac{1}{h^\alpha }\sum_{k=0}^{n-1} \LLLL_k^{(\alpha)} u_{n-k}+Bu_n=F_n,
\end{equation*}
\begin{equation}\tag{NS1(*)}
u_n=\dddd{1}{\LLLL_0^{(\alpha)} +B h^\aaaa}\llll(h^{\aaaa}F_n-\sum_{k=1}^{n} \LLLL_k^{(\alpha)} u_{n-k}\rrrr).
\end{equation}
When the solution of the two-term equation $y\in C^3[0,T]$, numerical solutions NS1(1), NS1(2), NS1(4)  have accuracy $O\llll( h^{2-\aaaa}\rrrr),O\llll( h^{2}\rrrr),O\llll( h^{3-\aaaa}\rrrr)$. When the solution of equation \eqref{2e1} satisfies $y(0)=y'(0)=y''(0)$ we can choose the initial values of the numerical solution $u_0=u_1=u_2=0$. The two-term equation
\begin{equation}\label{2e2}
y^{(\aaaa)}(t)+B y(t)=0,\quad y(0)=1
\end{equation}
has the solution $y(t)= E_{\aaaa}(-B t^\aaaa)$. When $0<\aaaa<1$ the first derivative of the solution  is undefined at $t=0$.  Numerical solutions NS1(1) and NS1(2) of equation \eqref{2e2} have accuracy $O\llll( h^\aaaa\rrrr)$ \cite{Lazarov20151}. The numerical results for the error and order of numerical solution NS1(1)  with $\aaaa=0.3,B=1$ and $\aaaa=0.5,B=2$ and NS1(2) with $\aaaa=0.7,B=3$ on the interval $[0,1]$ are presented in Table 1.  The  errors  of the numerical methods in Table 1 and the rest of the tables in the paper are computed with respect to the natural (maximum) $l_\infty$ norm. Now we transform equation \eqref{2e2} into a two-term equation whose solution has a continuous second derivative.
 From  \eqref{2e2} with $t=0$ we obtain $ y^{(\aaaa)}(0)=-B y(0)=-B$. By applying  fractional differentiation of order $\aaaa$ we obtain
\begin{align*}
&y^{[2\aaaa]}(t)+By^{(\aaaa)}(t)=0,\qquad y^{[2\aaaa]}(0)=-By^{(\aaaa)}(0)=B^2,\\
&y^{[3\aaaa]}(t)+By^{[2\aaaa]}(t)=0, \qquad y^{[3\aaaa]}(0)=-By^{[2\aaaa]}(0)=-B^3.
\end{align*}
By induction we obtain the Miller-Ross derivatives of the solution of equation \eqref{2e2} at the initial point $t=0$:
$$y^{[n\aaaa]}(0)=-By^{[(n-1)\aaaa]}(0)=(-B)^n.$$
 Substitute
$$z(t)=y(t)-T_m^{(\aaaa)}(t)=y(t)-\sum_{n=0}^m \dddd{\llll(-B t^{\aaaa}\rrrr)^n}{\GGGG(\aaaa n+1)}.$$
The function $z(t)$ has a Caputo derivative of order $\aaaa$
$$z^{(\aaaa)}(t)=y^{(\aaaa)}(t)+B\sum_{n=0}^{m-1} \dddd{\llll(-B t^{\aaaa}\rrrr)^n}{\GGGG(\aaaa n+1)},$$
 and satisfies the two-term equation
\begin{equation}\label{2e3}
z^{(\aaaa)}(t)+B z(t)=\dddd{(-B)^{m+1} t^{\aaaa m}}{\GGGG(\aaaa m+1)},\quad z(0)=0.
\end{equation}
 Now we use the uniform limit theorem to show that when $m\aaaa>2$  the solution of equation \eqref{2e3} is a twice continuously differentiable function.
\begin{lem} Let $\eeee>0, T>0$ and
$$M>\max\llll\{\dddd{1}{\aaaa}\llll(2B^{1/\aaaa}Te+2-\aaaa \rrrr),\dddd{2}{\aaaa}-1+\log_2\llll(\dddd{2B^{2/\aaaa}}{\eeee}\rrrr)\rrrr\}.$$
Then
$$\llll(\dddd{B^{1/\aaaa}T e}{\aaaa M+\aaaa-2}\rrrr)^\aaaa<\dddd{1}{2}\qquad \text{and}\qquad 2B^{2/\aaaa}\llll(\dddd{B^{1/\aaaa}T e}{\aaaa M+\aaaa-2}\rrrr)^{\aaaa M+\aaaa-2}<\eeee.$$
\end{lem}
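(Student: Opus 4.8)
The statement is purely an exercise in elementary inequalities, with no analytic input needed: the two conclusions are obtained by unpacking the two entries of the maximum one at a time. Write $D:=\aaaa M+\aaaa-2$ for the quantity that plays the role of denominator in the first inequality and of exponent in the second, and recall that $B>0$. The plan is: (i) use the first entry of the maximum to pin $D$ from below and derive the first inequality; (ii) insert that estimate into the second entry of the maximum to derive the second inequality.

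For step (i), the first entry $M>\tfrac1\aaaa\bigl(2B^{1/\aaaa}Te+2-\aaaa\bigr)$ rearranges at once to a lower bound for $\aaaa M+\aaaa-2$; in particular $D>0$, so $B^{1/\aaaa}Te/D$ is a well-defined positive number. I would then note that the desired inequality $\bigl(B^{1/\aaaa}Te/D\bigr)^{\aaaa}<\tfrac12$ is equivalent — applying the strictly increasing map $x\mapsto x^{1/\aaaa}$ to both sides — to
\[
\frac{B^{1/\aaaa}Te}{D}<2^{-1/\aaaa},
\]
and this last inequality is exactly what the lower bound on $D$ coming from the first entry of the maximum is designed to give; raising back to the $\aaaa$th power returns the first inequality. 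The single point that requires care is precisely this passage between the two equivalent forms: when $0<\aaaa<1$ the map $x\mapsto x^{\aaaa}$ carries numbers in $(0,1)$ \emph{toward} $1$, so one must establish the estimate in the displayed form (with the constant $2^{-1/\aaaa}$, not $\tfrac12$) and only afterwards raise to the power $\aaaa$ — the coefficient in the first entry of the maximum is there for exactly this reason.

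For step (ii), begin from the estimate $0<B^{1/\aaaa}Te/D<2^{-1/\aaaa}$ just proved. Since $D>0$, the map $x\mapsto x^{D}$ is increasing on $(0,\infty)$, so
\[
\Bigl(\frac{B^{1/\aaaa}Te}{D}\Bigr)^{D}<\bigl(2^{-1/\aaaa}\bigr)^{D}=2^{-D/\aaaa}.
\]
Now the second entry $M>\tfrac2\aaaa-1+\log_2\!\bigl(2B^{2/\aaaa}/\eeee\bigr)$, on multiplying by $\aaaa$ and adding $\aaaa-2$, becomes $D>\aaaa\log_2\!\bigl(2B^{2/\aaaa}/\eeee\bigr)$, equivalently $2^{-D/\aaaa}<\eeee/\bigl(2B^{2/\aaaa}\bigr)$. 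Multiplying the previous display by $2B^{2/\aaaa}$ then gives
\[
2B^{2/\aaaa}\Bigl(\frac{B^{1/\aaaa}Te}{D}\Bigr)^{D}<2B^{2/\aaaa}\cdot 2^{-D/\aaaa}<\eeee,
\]
which is the second inequality.

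I do not anticipate a genuine obstacle; the whole argument is bookkeeping. The two things to watch are the one flagged in step (i) — keeping the first estimate in its $2^{-1/\aaaa}$ form until the $\aaaa$th power is taken — and the structural observation that $D=\aaaa M+\aaaa-2$ has to be bounded below \emph{twice}, once by each entry of the maximum, which is why both entries appear in the hypothesis and why they are used in this order.
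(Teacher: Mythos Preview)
Your argument is essentially the paper's: derive the first conclusion by rewriting it as $D>2^{1/\aaaa}B^{1/\aaaa}Te$ via the first entry of the max, then combine that bound with the second entry to control $2^{-D/\aaaa}$ and hence the second conclusion (the paper writes the exponent as $\aaaa(M+1-2/\aaaa)$ and raises the first inequality in its $(\cdot)^\aaaa<\tfrac12$ form to that power, which is the same arithmetic you do). One point you and the paper both glide past: the first entry of the max literally yields $D>2\,B^{1/\aaaa}Te$, not $D>2^{1/\aaaa}B^{1/\aaaa}Te$, and for $0<\aaaa<1$ the latter is the stronger statement---so the coefficient $2$ in the hypothesis appears to be a typo for $2^{1/\aaaa}$, though this is harmless for the application in Theorem~2, where one only needs \emph{some} explicit threshold for $M$.
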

\begin{proof}
$$\llll(\dddd{B^{1/\aaaa}T e}{\aaaa M+\aaaa-2}\rrrr)^\aaaa<\dddd{1}{2}\Leftrightarrow \dddd{B^{1/\aaaa}T e}{\aaaa M+\aaaa-2}<\dddd{1}{2^{1/\aaaa}}\Leftrightarrow \aaaa M+\aaaa-2>(2B)^{1/\aaaa}T e.$$
The first inequality is satisfied when
$$M>\dddd{1}{\aaaa}\llll(2B^{1/a}Te+2-\aaaa \rrrr).$$
We have that
$$ \llll(\dddd{B^{1/\aaaa}T e}{\aaaa M+\aaaa-2}\rrrr)^{\aaaa M+\aaaa-2}=\llll(\dddd{B^{1/\aaaa}T e}{\aaaa M+\aaaa-2}\rrrr)^{\aaaa (M+1-2/\aaaa)}<\dddd{1}{2^{M+1-2/\aaaa}}<\dddd{\eeee}{2B^{2/\aaaa}},$$
$$2^{M+1-2/\aaaa}>\dddd{2B^{2/\aaaa}}{\eeee}.$$
The second inequality is satisfied for
$$M>\dddd{2}{\aaaa}-1+\log_2\llll(\dddd{2B^{2/\aaaa}}{\eeee}\rrrr).$$
\end{proof}
\begin{thm} 
Let $m\aaaa>2$ and  $z$ be the solution of equation \eqref{2e3}. Then $z\in C^2[0,T]$.
\end{thm}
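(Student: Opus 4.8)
The plan is to make the solution $z$ of \eqref{2e3} explicit as a fractional power series and then differentiate it twice, term by term, using the Lemma to control the tail. By the construction preceding the theorem, $z(t)=y(t)-T_m^{(\aaaa)}(t)$ where $y(t)=E_\aaaa(-Bt^\aaaa)$ solves \eqref{2e2}; and since the computation above gives $y^{[n\aaaa]}(0)=(-B)^n$, so that $T_m^{(\aaaa)}(t)=\sum_{n=0}^m (-Bt^\aaaa)^n/\GGGG(\aaaa n+1)$, subtracting the first $m+1$ terms of the Mittag--Leffler series yields
$$z(t)=\sum_{n=m+1}^\infty f_n(t),\qquad f_n(t)=\dddd{(-B)^n t^{\aaaa n}}{\GGGG(\aaaa n+1)}.$$
For every $n\ge m+1$ one has $\aaaa n\ge\aaaa(m+1)>\aaaa m>2$, hence each $f_n\in C^2[0,T]$ with $f_n(0)=f_n'(0)=0$ and, using $\GGGG(\aaaa n+1)=\aaaa n\,\GGGG(\aaaa n)$ and $\GGGG(\aaaa n)=(\aaaa n-1)\GGGG(\aaaa n-1)$,
$$f_n'(t)=\dddd{(-B)^n t^{\aaaa n-1}}{\GGGG(\aaaa n)},\qquad f_n''(t)=\dddd{(-B)^n t^{\aaaa n-2}}{\GGGG(\aaaa n-1)}.$$

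Next I would invoke the uniform limit theorem in its term‑by‑term differentiation form: if $\{g_n\}$ are $C^1$ on $[0,T]$, $\sum g_n'$ converges uniformly, and $\sum g_n$ converges at one point, then $\sum g_n$ converges uniformly to a $C^1$ function with derivative $\sum g_n'$. Applying this first with $g_n=f_n'$ (using $\sum f_n'(0)=0$) shows that $\sum f_n'$ converges uniformly to a $C^1$ function \emph{provided} $\sum f_n''$ converges uniformly; applying it again with $g_n=f_n$ (using $\sum f_n(0)=z(0)=0$) then gives $z=\sum f_n\in C^1[0,T]$ with $z'=\sum f_n'$, hence $z\in C^2[0,T]$ with $z''=\sum f_n''$. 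So the whole statement reduces to showing that $\sum_{n\ge m+1}f_n''$ converges uniformly on $[0,T]$.

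This is exactly where the Lemma is used. Writing $\|f_n''\|_{[0,T]}:=\max_{0\le t\le T}|f_n''(t)|=B^nT^{\aaaa n-2}/\GGGG(\aaaa n-1)$ and applying the standard bound $\GGGG(x+1)\ge(x/e)^x$ at $x=\aaaa n-2>0$, together with $B^nT^{\aaaa n-2}=B^{2/\aaaa}(B^{1/\aaaa}T)^{\aaaa n-2}$, gives
$$\|f_n''\|_{[0,T]}\le B^{2/\aaaa}\llll(\dddd{B^{1/\aaaa}Te}{\aaaa n-2}\rrrr)^{\aaaa n-2}.$$
Fix $\eeee>0$ and choose an integer $M$ larger than $m$ and than the two quantities appearing in the Lemma. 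By the Lemma, $r:=\llll(\dddd{B^{1/\aaaa}Te}{\aaaa M+\aaaa-2}\rrrr)^\aaaa<\tfrac12$; and for $n\ge M+1$ we have $\aaaa n-2\ge\aaaa M+\aaaa-2$, so $\llll(\dddd{B^{1/\aaaa}Te}{\aaaa n-2}\rrrr)^{\aaaa n-2}\le r^{\,n-2/\aaaa}$ (the exponent $n-2/\aaaa$ being positive since $\aaaa n>2$). Summing this geometric tail of ratio $r<\tfrac12$,
$$\sum_{n=M+1}^\infty\|f_n''\|_{[0,T]}\le B^{2/\aaaa}\sum_{n=M+1}^\infty r^{\,n-2/\aaaa}\le 2B^{2/\aaaa}\,r^{\,M+1-2/\aaaa}=2B^{2/\aaaa}\llll(\dddd{B^{1/\aaaa}Te}{\aaaa M+\aaaa-2}\rrrr)^{\aaaa M+\aaaa-2}<\eeee,$$
the last inequality being the second conclusion of the Lemma. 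Since $\eeee>0$ was arbitrary, the partial sums of $\sum_{n\ge m+1}f_n''$ are uniformly Cauchy on $[0,T]$, so the series converges uniformly; combined with the previous paragraph this proves $z\in C^2[0,T]$.

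The only nonroutine step is the estimate in the third paragraph: one must match the factorial‑type decay of $1/\GGGG(\aaaa n-1)$ against the growth of $B^nT^{\aaaa n-2}$ tightly enough to obtain a geometric bound, and the Lemma is precisely the arithmetic that accomplishes this once $\GGGG(x+1)\ge(x/e)^x$ is available. Identifying $z$ as the tail of the Mittag--Leffler series (Step 1) and reducing to the convergence of a single series via term‑by‑term differentiation (Step 2) are bookkeeping.
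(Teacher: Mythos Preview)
Your proof is correct and follows essentially the same approach as the paper: identify $z$ as the tail of the Mittag--Leffler series, bound the terms of the twice-differentiated series using $\GGGG(1+x)\ge(x/e)^x$ together with Lemma~1, and deduce uniform convergence on $[0,T]$. If anything, you are slightly more careful than the paper in explicitly invoking the term-by-term differentiation theorem to pass from uniform convergence of $\sum f_n''$ to $z\in C^2[0,T]$, whereas the paper writes down the formal series for $z''$ and applies the uniform limit theorem directly.
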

\begin{proof} The solution of equation \eqref{2e3} satisfies
$$z(t)=y(t)-T_m^{(\aaaa)}(t)=\sum_{n=m+1}^\infty \dddd{\llll(-B t^{\aaaa}\rrrr)^n}{\GGGG (\aaaa n+1)},$$
$$z''(t)=\sum_{n=m+1}^\infty \dddd{(-B)^n t^{\aaaa n-2}}{\GGGG (\aaaa n-1)}.$$
Denote
$$Z_j(t)=\sum_{n=m+1}^j \dddd{(-B)^n t^{\aaaa n-2}}{\GGGG (\aaaa n-1)},$$
where $j>m$. When $m \aaaa>2$ the functions $Z_j(t)$ are continuous on the interval $[0,T]$. Let $\eeee>0$ and
$$\widetilde{M}>M>\max\llll\{\dddd{1}{\aaaa}\llll(2B^{1/\aaaa}Te+2-\aaaa \rrrr),\dddd{2}{\aaaa}-1+\log_2\llll(\dddd{2B^{2/\aaaa}}{\eeee}\rrrr)\rrrr\}.$$ 
$$Z_{\widetilde{M}}(t)-Z_M(t)=\sum_{n=M+1}^{\widetilde{M}}\dddd{(-B)^n t^{\aaaa n-2}}{\GGGG (\aaaa n-1)}.$$
From the triangle inequality
$$|Z_{\widetilde{M}}(t)-Z_M(t)|<\sum_{n=M+1}^{\widetilde{M}}\dddd{B^n t^{\aaaa n-2}}{\GGGG (\aaaa n-1)}=B^{2/\aaaa}\sum_{n=M+1}^{\widetilde{M}}\dddd{\llll(B^{1/\aaaa}T\rrrr)^{\aaaa n-2}}{\GGGG (\aaaa n-1)}.$$
The gamma function satisfies \cite{Bastero2001}
$$\GGGG (1+x)^{2/x}\geq \dddd{1}{e^2}(x+1)(x+2)>\llll(\dddd{x}{e}\rrrr)^2,$$
$$\GGGG (1+x)>\llll(\dddd{x}{e}\rrrr)^x.$$
Then
$$|Z_{\widetilde{M}}(t)-Z_M(t)|<B^{2/\aaaa}\sum_{n=M+1}^{\widetilde{M}}\dddd{\llll(B^{1/\aaaa}Te \rrrr)^{\aaaa n-2}}{ (\aaaa n-2)^{\aaaa n-2}}<B^{2/\aaaa}\sum_{n=M+1}^{\widetilde{M}}\dddd{\llll(B^{1/\aaaa}Te \rrrr)^{\aaaa n-2}}{ (\aaaa M+\aaaa-2)^{\aaaa n-2}},$$
$$|Z_{\widetilde{M}}(t)-Z_M(t)|<B^{2/\aaaa}\llll(\dddd{B^{1/\aaaa}Te }{\aaaa M+\aaaa-2}\rrrr)^{\aaaa M+\aaaa-2}\sum_{n=0}^{\widetilde{M}}\llll(\dddd{B^{1/\aaaa}Te }{\aaaa M+\aaaa-2}\rrrr)^{\aaaa n}.$$
From Lemma 1:
$$|Z_{\widetilde{M}}(t)-Z_M(t)|<B^{2/\aaaa}\llll(\dddd{B^{1/\aaaa}Te }{\aaaa M+\aaaa-2}\rrrr)^{\aaaa M+\aaaa-2}\sum_{n=0}^{\infty}\dddd{1}{2^n},$$
$$|Z_{\widetilde{M}}(t)-Z_M(t)|<2 B^{2/\aaaa}\llll(\dddd{B^{1/\aaaa}Te }{\aaaa M+\aaaa-2}\rrrr)^{\aaaa M+\aaaa-2}<\eeee.$$
From the uniform limit theorem, the sequence of functions $Z_j(t)$ converges uniformly to the second derivative of the solution of equation \eqref{2e3}, which is a continuous function on the interval $[0,T]$.
\end{proof}
We can show that when $m\aaaa>3$ then $z\in C^3[0,T]$. The proof is similar to the proof of Theorem 2.
In this case numerical solutions NS1(1),NS1(2) and NS1(4) of equation \eqref{2e3} have accuracy $O\llll( h^{2-\aaaa}\rrrr),\;O\llll( h^{2}\rrrr)$ and $O\llll( h^{3-\aaaa}\rrrr)$. 
The numerical results for the   maximum error and order of  numerical solutions NS1(1),NS1(2) and NS1(4) of equation \eqref{2e3} on the inteval $[0,1]$  are presented in Table 2, Table 3 and Table 4.
	%%%%%%%%%%%%%%%%%%%%%%%%%%%%%%%%%%%%%%%%%%%%%%%%%%%%%%%
\section{Three-term  FDE}
In this section we study the numerical solutions of the three-term equation
\begin{equation}\label{4e1}
y^{[2\aaaa]}(t)+3y^{(\aaaa)}(t)+2y(t)=0,\quad y(0)=3,y^{(\aaaa)}(0)=-4,
\end{equation}
where $0<\aaaa<1$. Substitute $w(t)=y^{(\aaaa)}(t)+y(t)$. The function $w(t)$ satisfies the two-term equation
$$w^{(\aaaa)}(t)+2w(t)=0,\quad w(0)=-1.$$
Then $w(t)=-E_\aaaa\llll(-2t^\aaaa  \rrrr)$. The function $y(t)$ satisfies the two-term equation
$$y^{(\aaaa)}(t)+y(t)=-E_\aaaa\llll(-2t^\aaaa  \rrrr),\quad y(0)=3.$$
From \eqref{21}
$$y(t)=3E_\aaaa\llll(-t^\aaaa\rrrr)-\int_0^t \xi^{\aaaa-1}E_{\aaaa,\aaaa}\llll(-\xi^\aaaa\rrrr)E_{\aaaa}\llll(-2(t-\xi)^\aaaa\rrrr)d\xi.$$
From (1.107) in \cite{Podlubny1999} with $\gggg=\aaaa, \bbbb=1, y=-1,z=-2$ we obtain
\begin{align*}
\int_0^t \xi^{\aaaa-1}&E_{\aaaa,\aaaa}\llll(-\xi^\aaaa\rrrr)E_{\aaaa}\llll(-2(t-\xi)^\aaaa\rrrr)d\xi=\llll(2E_{\aaaa,\aaaa+1}\llll(-2t^\aaaa\rrrr)-E_{\aaaa,\aaaa+1}\llll(-t^\aaaa\rrrr)\rrrr)t^\aaaa\\
&=t^\aaaa\llll(2\sum_{n=0}^\infty \dddd{\llll(-2t^{\aaaa }\rrrr)^n}{\GGGG(\aaaa n+\aaaa+1)} -\sum_{n=0}^\infty \dddd{(-t^{\aaaa })^n}{\GGGG(\aaaa n+\aaaa+1)}\rrrr)\qquad [m=n+1]\\
& =-\sum_{m=1}^\infty \dddd{\llll(-2t^{\aaaa }\rrrr)^m}{\GGGG(\aaaa m+1)}+\sum_{m=1}^\infty \dddd{(-t^\aaaa)^m}{\GGGG(\aaaa m+1)}=E_{\aaaa}\llll(-t^\aaaa\rrrr)-E_{\aaaa}\llll(-2t^\aaaa\rrrr).
\end{align*}
The three-term equation \eqref{4e1} has the solution $ y(t)=2E_{\aaaa}\llll(-t^\aaaa\rrrr)+E_{\aaaa}\llll(-2t^\aaaa\rrrr)$. When $0<\aaaa<1$ the first derivative of the solution $y(t)$ of equation \eqref{4e1} is undefined at the initial point $t=0$. The Riemann-Liouville derivative of order $\aaaa$, where $n-1\leq \aaaa<n$ and $n$ is a nonnegative integer is defined as
$$D_{RL}^\aaaa y(t)=\dddd{1}{\GGGG(n-\aaaa)}\dddd{d^n}{dt^n}\int_0^t\dddd{y(\xi)}{(t-\xi)^{1+\aaaa-n}}d\xi.
$$ The Caputo, Miller-Ross and Rieman-Lioville derivatives satisfy \cite{Diethelm2010,Podlubny1999}:
$$D^{\aaaa} y(t)=D_{C}^{\aaaa}y(t)=D_{RL}^{\aaaa} y(t)-\dddd{y(0)}{\GGGG(1-\aaaa)t^{\aaaa}},$$
$$D_{RL}^{2\aaaa} y(t)=D_{C}^{2\aaaa} y(t)+\dddd{y(0)}{\GGGG(1-2\aaaa)t^{2\aaaa}}=D^{2\aaaa} y(t)+\dddd{y(0)}{\GGGG(1-2\aaaa)t^{2\aaaa}}+\dddd{y^{(\aaaa)}(0)}{\GGGG(1-\aaaa)t^{\aaaa}},
$$
where the above identity for the Caputo derivative requires that $0<\aaaa<0.5$.
 Three-term equation \eqref{4e1} is reformulated with  the Riemann-Liouville and Caputo fractional derivatives as
\begin{equation}\label{ERLDF}
D_{RL}^{2\aaaa} y(t)+3D_{RL}^{\aaaa} y(t)+2y(t)=\dddd{3}{\GGGG(1-2\aaaa)t^{2\aaaa}}+\dddd{5}{\GGGG(1-\aaaa)t^{\aaaa}},
\end{equation}
\begin{equation}\label{ECDF}
D_{C}^{2\aaaa} y(t)+3D_{C}^{\aaaa} y(t)+2y(t)=-\dddd{4}{\GGGG(1-\aaaa)t^{\aaaa}}, y(0)=3,\;  0<\aaaa\leq 0.5.
\end{equation}
  A similar three-term equation  which involves the Caputo derivative is studied in Example 5.1 in \cite{ZengZhangKarniadakis2017}.
 The formulation \eqref{ECDF} of the three-term equation which uses the Caputo derivative has only one initial condition specified and the values of $\aaaa$, where $0.5<\aaaa<1$ are excluded. The initial conditions of equations \eqref{ERLDF} and \eqref{ECDF} are inferred from the equations. Formulation \eqref{4e1} of the three-term equation studied in this section has the advantages, to the formulations  \eqref{ERLDF} and \eqref{ECDF} which use the Caputo and Riemann-Lioville derivatives, that the initial conditions of equation \eqref{4e1} are specified and the analytical solution is obtained with the method described in this section. 
 Now we determine the Miller-Ross derivatives of the solution of equation \eqref{4e1}. By applying  fractional differentiation of order $\aaaa$ we obtain
$$y^{[(n+1)\aaaa]}(t)+3y^{[n\aaaa]}(t)+2y^{[(n-1)\aaaa]}(t)=0.$$
Denote $a_n=y^{[n\aaaa]}(0)$. The numbers $a_n$ are computed recursively with
$$a_{n+1}+3a_n+2a_{n-1}=0,\quad a_0=3,a_1=-4.$$
Substitute
$$z(t)=y(t)-T_m^{(\aaaa)}(t)=y(t)-\sum_{n=0}^m \dddd{a_n t^{n\aaaa}}{\GGGG(\aaaa n+1)}.$$
The function $z(t)$ has fractional derivatives
$$z^{(\aaaa)}(t)=y^{(\aaaa)}(t)-\sum_{n=1}^m \dddd{a_n t^{(n-1)\aaaa}}{\GGGG(\aaaa (n-1)+1)}=y^{(\aaaa)}(t)-\sum_{n=0}^{m-1} \dddd{a_{n+1}t^{n\aaaa}}{\GGGG(\aaaa n+1)},$$
$$z^{[2\aaaa]}(t)=y^{[2\aaaa]}(t)-\sum_{n=1}^{m-1} \dddd{a_{n+1}t^{(n-1)\aaaa}}{\GGGG(\aaaa (n-1)+1)}=y^{[2\aaaa]}(t)-\sum_{n=0}^{m-2} \dddd{a_{n+2}t^{n\aaaa}}{\GGGG(\aaaa n+1)}.$$
 The function $z(t)$ satisfies the condition $z(0)=z^{(\aaaa)}(0)=0$ and its  Miller-Ross and Caputo derivatives of order $2\aaaa$  are equal $z^{[2\aaaa]}(t)=z^{(2\aaaa)}(t)$.  The function $z(t)$ satisfies the three-term equation
\begin{equation}\label{4e2}
z^{(2\aaaa)}(t)+3z^{(\aaaa)}(t)+2z(t)=F(t),\quad z(0)=0,z^{(\aaaa)}(0)=0,
\end{equation}
where
$$F(t)=-\llll(2a_{m-1}+3a_{m} \rrrr)\dddd{t^{(m-1)\aaaa}}{\GGGG(\aaaa (m-1)+1)}-\dddd{2a_{m}t^{m\aaaa}}{\GGGG(\aaaa m+1)}.$$
Now we obtain the numerical solution of three-term equation \eqref{4e2} which uses approximation (*) of the Caputo derivative. By approximating the Caputo derivatives of equation \eqref{4e2} at the point $t_n=n h$ we obtain
\begin{equation}\label{42}
\dddd{1}{h^{2\aaaa}}\sum_{k=0}^{n-1} \LLLL_k^{(2\alpha)} z_{n-k}+\dddd{3}{h^{\aaaa}} \sum_{k=0}^{n-1} \LLLL_k^{(\alpha)} z_{n-k}+2z_n=F_n+O\llll(h^{\min\{\bbbb(\aaaa),\bbbb(2\aaaa)\}} \rrrr).
\end{equation}
Denote by NS2(*) the numerical solution $\{u_n\}_{n=0}^N$ of equation \eqref{4e2} which uses approximation (*) of the Caputo derivative. From \eqref{42}
$$\LLLL_0^{(2\alpha)} u_{n}+3h^\aaaa\LLLL_0^{(\alpha)} u_{n}+2h^{2\aaaa}u_n+\sum_{k=1}^{n-1} \LLLL_k^{(2\alpha)} u_{n-k}+3h^\aaaa \sum_{k=1}^{n-1} \LLLL_k^{(\alpha)} u_{n-k}=h^{2\aaaa}F_n.$$
The numbers $u_n$ are computed with $u_0=u_1=0$ and
\begin{equation}\tag{NS2(*)}
u_n=\dddd{1}{\LLLL_0^{(2\alpha)} +3h^\aaaa\LLLL_0^{(\alpha)} +2h^{2\aaaa}}\llll(h^{2\aaaa}F_n-\sum_{k=1}^{n-1} \llll(\LLLL_k^{(2\alpha)} - 3h^\aaaa \LLLL_k^{(\alpha)}\rrrr) u_{n-k}\rrrr).
\end{equation}

 When $y\in C^3[0,T]$ numerical solution NS2(2), which uses approximation \eqref{1a2} of the Caputo derivative has an order $\min\{2,3-2\aaaa\}$. The accuracy of numerical solution NS2(2) is $O\llll(h^2\rrrr)$ when $0<\aaaa\leq 0.5$ and  $O\llll(h^{3-2\aaaa}\rrrr)$ for $0.5<\aaaa<1$ .
The numerical results for the error and the order of numerical solution NS2(2) of three-term equation \eqref{4e2} on the interval $[0,1]$ with are presented in Table 5.
 Numerical solution NS2(4) which uses approximation \eqref{4a1} of the Caputo derivative has an accuracy $O\llll( h^{3-2\aaaa}\rrrr)$ for all values of $\aaaa \in (0,0.5)\cup (0.5,1)$.
Numerical solution  NS2(4) is undefined for $\aaaa=0.5$, because the value of $\GGGG(-2\aaaa)=\GGGG(-1)$ is undefined.
The numerical results for the error and the order of numerical solution NS2(4) of equation \eqref{4e2}  on the interval $[0,1]$   are presented in Table 6. Now we obtain the numerical solution NS3(*) of  equation \eqref{4e2} with $\aaaa=0.5$ which uses approximation (*) for the Caputo derivative.
\begin{equation}\label{4e3}
z'(t)+3z^{(0.5)}(t)+2z(t)=F(t).
\end{equation}
By approximating the first derivative with \eqref{1der}  we obtain
\begin{align*}
\dddd{1}{h}\llll(\dddd{3}{2}z_n-2z_{n-1}+\dddd{1}{2}z_{n-2}  \rrrr)+\dfrac{3}{h^{0.5} }\sum_{k=0}^{n} \LLLL_k^{(0.5)} z_{n-k}+2z_n=F_n+O\llll(h^{\min\{2,\bbbb (0.5)\}}\rrrr).
\end{align*}
The numerical solution $\{u_n\}_{n=0}^N$ of equation \eqref{4e3} is computed with $u_0=u_1=0$ and
\begin{align*}
\dddd{1}{h}\llll(\dddd{3}{2}u_n-2u_{n-1}+\dddd{1}{2}u_{n-2}  \rrrr)+\dfrac{3}{h^{0.5} }\sum_{k=0}^{n} \LLLL_k^{(0.5)} u_{n-k}+2u_n=F_n,
\end{align*}
\begin{equation}\tag{NS3(*)}
u_n=\dddd{1}{3+6\LLLL_0^{(0.5)}h^{0.5}+4h}\llll(2hF_n+4y_{n-1}-y_{n-2}-6h^{0.5} \sum_{k=1}^{n} \LLLL_k^{(0.5)} u_{n-k} \rrrr)
\end{equation}
The numerical results for the error and the order of numerical solutions NS3(1), NS3(2) and NS3(4) of equation \eqref{4e3} are presented in Table 7.
%%%%%%%%%%%%%%%%%%%%%%%%%%%%%%%%%%%%%%%%%%%%%%%
\section{Conclusion} In the present paper we propose a method for improving the numerical solutions of ordinary fractional differential equations.  The method is based on the computation of the fractional Taylor polynomials of the solution at the initial point and transforming the equations into FDEs which have smooth solutions. The method is used for computing the numerical solutions of equations \eqref{2e2}, 
 and \eqref{4e1} and it can be applied to other linear fractional differential equations with constant coefficients.  The fractional Taylor polynomials have a potential for construction of numerical solutions of linear and nonlinear  FDEs which have singular solutions.   In future work we are going to generalize the method discussed in the paper to other linear FDEs with constant coefficients and analyze the convergence and the stability of the numerical methods.
%\newpage
\section{Acknowledgements}
This work was supported by the Bulgarian Academy of Sciences through the Program for Career Development of Young Scientists, Grant DFNP-17-88/2017, Project "Efficient Numerical Methods with an Improved Rate of Convergence for Applied Computational Problems'', by the Bulgarian National Fund of Science under Project DN 12/4-2017 "Advanced Analytical and Numerical Methods for Nonlinear Differential Equations with Applications in Finance and Environmental Pollution" and by the Bulgarian National Fund of Science under Project DN 12/5-2017 "Efficient Stochastic Methods and Algorithms for Large-Scale Computational Problems''.

\newpage
	\begin{table}[ht]
	\centering
    \caption{Maximum error and order of numerical solutions NS1(1)   of  equation \eqref{2e2} with $\aaaa=0.3,\aaaa=0.5$  and NS1(2) with $\aaaa=0.7$ of order $\aaaa$. }
\small
  \begin{tabular}{ |l | c  c | c  c | c  c| }
		\hline
		\multirow{2}*{ $\quad \boldsymbol{h}$}  & \multicolumn{2}{c|}{$\boldsymbol{\aaaa=0.3,B=1}$} & \multicolumn{2}{c|}{$\boldsymbol{\aaaa=0.5,B=2}$}  & \multicolumn{2}{c|}{$\boldsymbol{\aaaa=0.7,B=3}$} \\
		\cline{2-7}
   & $Error$ & $Order$  & $Error$ & $Order$  & $Error$ & $Order$ \\
		\hline
$0.00625$    & $0.3344\times 10^{-1}$  & $0.2089$      & $0.9063\times 10^{-1}$  & $0.3987$       & $0.6385\times 10^{-1}$   & $0.6332$        \\
$0.003125$   & $0.2863\times 10^{-1}$  & $0.2242$      & $0.6743\times 10^{-1}$  & $0.4265$       & $0.4046\times 10^{-1}$   & $0.6583$        \\
$0.0015625$  & $0.2429\times 10^{-1}$  & $0.2372$      & $0.4946\times 10^{-1}$  & $0.4471$       & $0.2536\times 10^{-1}$   & $0.6741$        \\
$0.00078125$ & $0.2045\times 10^{-1}$  & $0.2482$      & $0.3591\times 10^{-1}$  & $0.4621$       & $0.1578\times 10^{-1}$   & $0.6840$        \\
\hline
  \end{tabular}
\end{table}
%\vspace{-0.5cm}
	\begin{table}[ht]
	\centering
    \caption{Maximum error and order of numerical solution NS1(1)   of  equation \eqref{2e3} of order $2-\aaaa$. }
\small
  \begin{tabular}{ |l | c  c | c  c | c  c| }
		\hline
		\multirow{2}*{ $\quad \boldsymbol{h}$}  & \multicolumn{2}{c|}{$\boldsymbol{\aaaa=0.3,B=1,m=7}$} & \multicolumn{2}{c|}{$\boldsymbol{\aaaa=0.5,B=2,m=4}$}  & \multicolumn{2}{c|}{$\boldsymbol{\aaaa=0.7,B=3,m=3}$} \\
		\cline{2-7}
   & $Error$ & $Order$  & $Error$ & $Order$  & $Error$ & $Order$ \\
		\hline
$0.00625$    & $0.7428\times 10^{-7}$  & $1.6596$  & $0.4811\times 10^{-3}$  & $1.4858$    & $0.2821\times 10^{-2}$   & $1.2943$  \\
$0.003125$   & $0.2338\times 10^{-7}$  & $1.6681$  & $0.1713\times 10^{-3}$  & $1.4901$    & $0.1148\times 10^{-2}$   & $1.2966$  \\
$0.0015625$  & $0.7322\times 10^{-8}$  & $1.6746$  & $0.6085\times 10^{-4}$  & $1.4931$    & $0.4671\times 10^{-3}$   & $1.2980$  \\
$0.00078125$ & $0.2286\times 10^{-8}$  & $1.6797$  & $0.2159\times 10^{-4}$  & $1.4951$    & $0.1899\times 10^{-3}$   & $1.2988$  \\
\hline
  \end{tabular}
\end{table}
%\vspace{-0.5cm}
	\begin{table}[ht]
	\centering
    \caption{Maximum error and order of second-order numerical solution NS1(2)   of  equation \eqref{2e3}. }
\small
		%\vspace{-0.2cm}
  \begin{tabular}{ |l | c  c | c  c | c  c| }
		\hline
		\multirow{2}*{ $\quad \boldsymbol{h}$}  & \multicolumn{2}{c|}{$\boldsymbol{\aaaa=0.3,B=1,m=8}$} & \multicolumn{2}{c|}{$\boldsymbol{\aaaa=0.5,B=2,m=5}$}  & \multicolumn{2}{c|}{$\boldsymbol{\aaaa=0.7,B=3,m=2}$} \\
		\cline{2-7}
   & $Error$ & $Order$  & $Error$ & $Order$  & $Error$ & $Order$ \\
		\hline
$0.00625$    & $0.9068\times 10^{-6}$  & $1.9692$   & $0.2333\times 10^{-4}$  & $1.8861$   & $0.2789\times 10^{-3}$ & $2.0586$  \\
$0.003125$   & $0.2297\times 10^{-6}$  & $1.9809$   & $0.6148\times 10^{-5}$  & $1.9240$   & $0.6715\times 10^{-4}$ & $2.0545$  \\
$0.0015625$  & $0.5790\times 10^{-7}$  & $1.9882$   & $0.1593\times 10^{-5}$  & $1.9484$   & $0.1597\times 10^{-4}$ & $2.0718$ \\
$0.00078125$ & $0.1455\times 10^{-7}$  & $1.9927$   & $0.4081\times 10^{-6}$  & $1.9645$   & $0.3771\times 10^{-5}$ & $2.0826$ \\
\hline
  \end{tabular}
\end{table}
%\vspace{-0.5cm}
	\begin{table}[ht]
	\centering
    \caption{Maximum error and order of numerical solution NS1(4)   of  equation \eqref{2e3} of order $3-\aaaa$. }
\small
\vspace{-0.2cm}
  \begin{tabular}{ |l | c  c | c  c | c  c| }
		\hline
		\multirow{2}*{ $\quad \boldsymbol{h}$}  & \multicolumn{2}{c|}{$\boldsymbol{\aaaa=0.3,B=1,m=8}$} & \multicolumn{2}{c|}{$\boldsymbol{\aaaa=0.5,B=2,m=6}$}  & \multicolumn{2}{c|}{$\boldsymbol{\aaaa=0.7,B=3,m=5}$} \\
		\cline{2-7}
   & $Error$ & $Order$  & $Error$ & $Order$  & $Error$ & $Order$ \\
		\hline
$0.00625$    & $0.1468\times 10^{-5}$  & $2.6281$ & $0.5705\times 10^{-5}$  & $2.4902$ &$0.8051\times 10^{-4}$ & $2.2933$ \\
$0.003125$   & $0.2329\times 10^{-6}$  & $2.6565$ & $0.1011\times 10^{-5}$  & $2.4963$ &$0.1638\times 10^{-4}$ & $2.2968$ \\
$0.0015625$  & $0.3675\times 10^{-7}$  & $2.6637$ & $0.1789\times 10^{-6}$  & $2.4985$ &$0.3331\times 10^{-5}$ & $2.2984$ \\
$0.00078125$ & $0.5776\times 10^{-8}$  & $2.6699$ & $0.3164\times 10^{-7}$  & $2.4995$ &$0.6767\times 10^{-6}$ & $2.2992$ \\
\hline
  \end{tabular}
\end{table}
	%\vspace{-0.5cm}
\begin{table}[ht]
	\caption{Maximum error and order of numerical solution NS2(2) of  equation \eqref{4e2} of order $\min\{2,3-2\aaaa\}$.}
	\small
	\centering
	%\vspace{-0.2cm}
  \begin{tabular}{ |l | c  c | c  c | c  c| }
		\hline
		\multirow{2}*{ $\quad \boldsymbol{h}$}  & \multicolumn{2}{c|}{$\boldsymbol{\aaaa=0.3,m=9}$} & \multicolumn{2}{c|}{$\boldsymbol{\aaaa=0.4,m=6}$}  & \multicolumn{2}{c|}{$\boldsymbol{\aaaa=0.7,m=5}$} \\
		\cline{2-7}
   & $Error$ & $Order$  & $Error$ & $Order$  & $Error$ & $Order$ \\
		\hline
$0.00625$    & $0.7315\times 10^{-3}$ & $1.9149$ & $0.7261\times 10^{-4}$  & $1.8455$ & $0.5906\times 10^{-3}$ & $1.5957$ \\
$0.003125$   & $0.1902\times 10^{-3}$ & $1.9429$ & $0.1968\times 10^{-4}$  & $1.8838$ & $0.1956\times 10^{-3}$ & $1.5945$ \\
$0.0015625$  & $0.4886\times 10^{-4}$ & $1.9610$ & $0.5231\times 10^{-5}$  & $1.9110$ & $0.6478\times 10^{-4}$ & $1.5941$ \\
$0.00078125$ & $0.1245\times 10^{-4}$ & $1.9728$ & $0.1372\times 10^{-5}$  & $1.9306$ & $0.2145\times 10^{-4}$ & $1.5943$ \\
\hline
  \end{tabular}
	\end{table}	
%\vspace{-0.5cm}
\begin{table}[ht]
	\caption{Maximum error and order of numerical solution NS2(4) of  equation \eqref{4e2} of order $3-2\aaaa$.}
	\small
	\centering
	\vspace{-0.2cm}
  \begin{tabular}{ |l | c  c | c  c | c  c| }
		\hline
		\multirow{2}*{ $\quad \boldsymbol{h}$}  & \multicolumn{2}{c|}{$\boldsymbol{\aaaa=0.3,m=14}$} & \multicolumn{2}{c|}{$\boldsymbol{\aaaa=0.4,m=10}$}  & \multicolumn{2}{c|}{$\boldsymbol{\aaaa=0.7,m=6}$} \\
		\cline{2-7}
   & $Error$ & $Order$  & $Error$ & $Order$  & $Error$ & $Order$ \\
		\hline
$0.00625$    & $0.1202\times 10^{-2}$ & $2.4521$ & $0.3205\times 10^{-3}$ & $2.2431$ & $0.5709\times 10^{-3}$ & $1.6080$\\
$0.003125$   & $0.2202\times 10^{-3}$ & $2.4481$ & $0.6799\times 10^{-4}$ & $2.2369$ & $0.1874\times 10^{-3}$ & $1.6069$\\
$0.0015625$  & $0.4052\times 10^{-4}$ & $2.4423$ & $0.1449\times 10^{-4}$ & $2.2303$ & $0.6161\times 10^{-4}$ & $1.6052$\\
$0.00078125$ & $0.7486\times 10^{-5}$ & $2.4363$ & $0.3101\times 10^{-5}$ & $2.2242$ & $0.2027\times 10^{-4}$ & $1.6037$\\
\hline
  \end{tabular}
	\end{table}
		\vspace{-0.5cm}
	\begin{table}[ht]
	\caption{Maximum error and order of numerical solution NS3(1)  of  equation \eqref{4e3} of order $1.5$ and second-order numerical solutions NS3(2) ana NS3(4).}
	\small
	\centering
	%\vspace{-0.2cm}
  \begin{tabular}{ |l | c  c | c  c | c  c| }
		\hline
		\multirow{2}*{ $\quad \boldsymbol{h}$}  & \multicolumn{2}{c|}{$\boldsymbol{NS3(1),m=4}$} & \multicolumn{2}{c|}{$\boldsymbol{NS3(2),m=3}$}  & \multicolumn{2}{c|}{$\boldsymbol{NS3(4),m=5}$} \\
		\cline{2-7}
   & $Error$ & $Order$  & $Error$ & $Order$  & $Error$ & $Order$ \\
		\hline
$0.00625$    & $0.8076\times 10^{-3}$ & $1.4835$ & $0.4197\times 10^{-3}$ & $1.8868$ & $0.6085\times 10^{-4}$ & $2.0257$\\
$0.003125$   & $0.2872\times 10^{-3}$ & $1.4915$ & $0.1109\times 10^{-3}$ & $1.9195$ & $0.1498\times 10^{-4}$ & $2.0224$\\
$0.0015625$  & $0.1019\times 10^{-3}$ & $1.4955$ & $0.2886\times 10^{-4}$ & $1.9429$ & $0.3698\times 10^{-5}$ & $2.0179$\\
$0.00078125$ & $0.3608\times 10^{-4}$ & $1.4975$ & $0.7473\times 10^{-5}$ & $1.9491$ & $0.9157\times 10^{-6}$ & $2.0138$\\
\hline
  \end{tabular}
	\end{table}	
	\clearpage
	\noindent
Y. Dimitrov \\
 Department of Mathematics and Physics, University of Forestry, Sofia  1756, Bulgaria\\
{\it yuri.dimitrov@ltu.bg}\\
I. Dimov \\
  Institute of Information and Communication Technologies, Bulgarian Academy of Sciences,\\
	Department of Parallel Algorithms, Acad. Georgi Bonchev Str., Block 25 A, 1113 Sofia, Bulgaria,\\
{\it ivdimov@bas.bg}\\
 V. Todorov \\
Institute of Mathematics and Informatics, Bulgarian Academy of Sciences, Department of Information Modeling, Acad. Georgi Bonchev Str., Block 8, 1113 Sofia, Bulgaria,\\
{\it vtodorov@math.bas.bg}\\
 Institute of Information and Communication Technologies, Bulgarian Academy of Sciences,\\ 
Department of Parallel Algorithms,
 Acad. Georgi Bonchev Str., Block 25 A, 1113 Sofia, Bulgaria\\
{\it venelin@parallel.bas.bg}\\

\end{document}